\newtheorem{precor}{{\bf Corollary}}
\newenvironment{cor}{\begin{precor}{\hspace{-0.5
               em}{\bf.\ }}}{\end{precor}}
\newtheorem{precon}{{\bf Conjecture}}
\newtheorem{predefin}{{\bf Definition}}
\newenvironment{defin}[1]{\begin{predefin}{\hspace{-0.5
                   em}{\bf.\ }}{\rm
#1}\hfill{$\spadesuit$}}{\end{predefin}}
\newtheorem{preexm}{{\bf Example}}
\newtheorem{preappl}{{\bf Application}}
\newtheorem{prelem}{{\bf Lemma}}
\newenvironment{lem}{\begin{prelem}{\hspace{-0.5
               em}{\bf.\ }}}{\end{prelem}}
\newtheorem{preproof}{{\bf Proof.\ }}
\newenvironment{proof}[1]{\begin{preproof}{\rm
               #1}\hfill{$\blacksquare$}}{\end{preproof}}
\newtheorem{presproof}{{\bf Sketch of Proof.\ }}
\newtheorem{prethm}{{\bf Theorem}}
\newenvironment{thm}{\begin{prethm}{\hspace{-0.5
               em}{\bf.\ }}}{\end{prethm}}
\newtheorem{prealphthm}{{\bf Theorem}}
\newenvironment{alphthm}{\begin{prealphthm}{\hspace{-0.5
               em}{\bf.\ }}}{\end{prealphthm}}
\newtheorem{prealphlemma}{{\bf Lemma}}
\newenvironment{alphlemma}{\begin{prealphlemma}{\hspace{-0.5
               em}{\bf.\ }}}{\end{prealphlemma}}
\newtheorem{prepro}{{\bf Proposition}}
\newtheorem{preprb}{{\bf Problem}}
\newtheorem{prequ}{{\bf Question}}
\newenvironment{qu}{\begin{prequ}{\hspace{-0.5
               em}{\bf.\ }}}{\end{prequ}}
\def\conct[#1,#2]{\mbox {${#1} \leftrightarrow {#2}$}}
\def\dconct[#1,#2]{\mbox {${#1} \rightarrow {#2}$}}
\def\deg[#1,#2]{\mbox {$d_{_{#1}}(#2)$}}
\def\mindeg[#1]{\mbox {$\delta_{_{#1}}$}}
\def\maxdeg[#1]{\mbox {$\Delta_{_{#1}}$}}
\def\outdeg[#1,#2]{\mbox {$d_{_{#1}}^{^+}(#2)$}}
\def\minoutdeg[#1]{\mbox {$\delta_{_{#1}}^{^+}$}}
\def\maxoutdeg[#1]{\mbox {$\Delta_{_{#1}}^{^+}$}}
\def\indeg[#1,#2]{\mbox {$d_{_{#1}}^{^-}(#2)$}}
\def\minindeg[#1]{\mbox {$\delta_{_{#1}}^{^-}$}}
\def\maxindeg[#1]{\mbox {$\Delta_{_{#1}}^{^-}$}}
\def\isdef{\mbox {$\ \stackrel{\rm def}{=} \ $}}
\def\dre[#1,#2,#3]{\mbox {${\cal E}_{_{#3}}(#1,#2)$}}
\def\pdre[#1,#2,#3]{\mbox {${\cal P}_{_{#3}}(#1,#2)$}}
\def\var[#1,#2]{\mbox {${\rm Var}_{_{#1}}(#2)$}}
\def\ls[#1]{\mbox {$\xi^{^{#1}}$}}
\def\hom[#1,#2]{\mbox {${\rm Hom}({#1},{#2})$}}
\def\onvhom[#1,#2]{\mbox {${\rm Hom^{v}}(#1,#2)$}}
\def\onehom[#1,#2]{\mbox {${\rm Hom^{e}}(#1,#2)$}}
\def\core[#1]{\mbox {$#1^{^{\bullet}}$}}
\def\cay[#1,#2]{\mbox {${\rm Cay}({#1},{#2})$}}
\def\cays[#1,#2]{\mbox {${\rm Cay_{s}}({#1},{#2})$}}
\def\dirc[#1]{\mbox {$\stackrel{\rightarrow}{C}_{_{#1}}$}}
\def\cycl[#1]{\mbox {${\bf Z}_{_{#1}}$}}
\def\sdg[#1]{\mbox {$\stackrel{\leftrightarrow}{#1}$}}
\begin{document}
\footnotetext[1]{This paper is partially supported by Shahid
Beheshti University.}
\begin{center}
{\Large \bf A Generalization of the Erd\"{o}s-Ko-Rado Theorem}\\
\vspace*{0.5cm}
{\bf Meysam Alishahi, Hossein Hajiabolhassan and Ali Taherkhani}\\
{\it Department of Mathematical Sciences}\\
{\it Shahid Beheshti University, G.C.}\\
{\it P.O. Box {\rm 1983963113}, Tehran, Iran}\\
{\tt m\_alishahi@sbu.ac.ir}\\
{\tt hhaji@sbu.ac.ir}\\
{\tt a\_taherkhani@sbu.ac.ir}\\ \ \\
\end{center}
\begin{abstract}
\noindent In this note, we investigate some properties of local
Kneser graphs defined in \cite{1096284}. In this regard, as a
generalization of the Erd${\rm \ddot{o}}$s-Ko-Rado theorem, we
characterize the maximum independent sets of local Kneser graphs.
Next, we present an upper bound for their chromatic number.
\begin{itemize}
\item[]{{\footnotesize {\bf Key words:}\  local chromatic number, graph homomorphism.}}
\item[]{ {\footnotesize {\bf Subject classification: 05C} .}}
\end{itemize}
\end{abstract}
\section{Introduction}
 In this section we elaborate on some basic definitions and facts
that will be used later. Throughout the paper the word {\it
graph} is used for a finite simple graph with a prescribed set of
vertices.  A {\it homomorphism} $\sigma: G \longrightarrow H$ from
a graph $G$ to a graph $H$ is a map $\sigma: V(G) \longrightarrow
V(H)$ such that $uv \in E(G)$ implies $\sigma(u)\sigma(v) \in
E(H)$. The existence of a homomorphism is indicated by the symbol
$G \longrightarrow H$ (for more on graph homomorphisms
see \cite{MR2089014}).\\

In \cite{MR1067243} Bondy and Hell define $\nu(G,K)$, for two
graphs $G$ and $K$, as the maximum number of vertices in a
subgraph of $G$ that admits a homomorphism to $K$; and using this
they introduce  the following generalization of a result of
Albertson and Collins \cite{MR791653} in which
$\mu(G,K)=|V(G)|/\nu(G,K)$.
\begin{alphthm}{\rm  \cite{MR1067243}}\label{BOHETHM}\
Let $G,H$ and $K$ be graphs where $H$ is a vertex--transitive
graph. If there exists a homomorphism $\sigma: G \longrightarrow
H$ then $\mu(G,K) \leq \mu(H,K)$.
\end{alphthm}

Hereafter, we denote by $[m]$ the set $\{1, 2, \ldots, m\}$, and
denote by ${[m] \choose n}$ the collection of all $n$-subsets of
$[m]$. Suppose $m \geq 2n$ are positive integers. We denote by
$[m]$ the set $\{1, 2, \cdots, m\}$, and denote by ${[m] \choose
n}$ the collection of all $n$-subsets of $[m]$. The {\em Kneser
graph} $KG(m,n)$ has vertex set ${[m] \choose n}$, in which $A
\sim B$ if and only if $A \cap B = \emptyset$. It was conjectured
by Kneser \cite{MR0068536} in 1955 and proved by Lov\'{a}sz
\cite{MR514625} in 1978 that $\chi(KG(m,n))=m-2n+2$.
The {\it local chromatic number} of a graph was defined in
\cite{MR837951} as the minimum number of colors that must appear
within distance $1$ of a vertex. Here is the formal definition.

\begin{defin}{Let $G$ be a graph. Define the local chromatic
number of $G$ as follows
$$\psi(G)\isdef {\displaystyle \min_c}{\displaystyle \max_{v\in V(G)}}|\{c(u): u\in V(G), d_G(u,v)\leq 1\}|,$$
where the minimum is taken over all proper colorings $c$ of $G$
and $d_G(u,v)$ denotes the distance between $u$ and $v$ in $G$.}
\end{defin}

The local chromatic number of graphs has received attention in
recent years \cite{MR2184127, 1096284, MR2279672, MR2452828}.
Clearly, $\psi(G)$ is always bounded from above by the chromatic
number, $\chi(G)$. It is much less obvious that $\psi(G)$ can be
strictly less than $\chi(G)$. In fact, it was proved in
\cite{MR837951}, there exist graphs with $\psi(G)=3$ and
$\chi(G)$ arbitrarily large.

One can define $\psi(G)$ via graph homomorphism. In this regard,
local complete graphs were defined in \cite{MR837951} as follows.

\begin{defin}{
Let $n$ and $r$ be  positive integers where $n\geq r$. Define the
local complete graph $U(n,r)$ as follows.

$$V(U(n,r))=\{(a,A)|\ a\in [n],\ A\subseteq
[n],\ |A|=r-1,\ a\notin A \}$$ and
$$E(U(n,r))=\{\{(a,A),(b,B)\}|\ a\in B,\ b\in A \}.$$

}\end{defin}

The following simple lemma reveals the connection between local
complete graphs and local chromatic number.

\begin{alphlemma}{{\rm(Erd{\H{o}}s et al. \cite{MR837951})}
A graph $G$ admits a proper coloring $c$ with $n$ colors and
$\max_{v\in V(G)}|\{c(u)|\ u\in N [v]\}|\leq r$ if and only if
there exists a homomorphism from $G$ to $U(n,r)$. In particular
$\psi(G)\leq r$ if and only if there exists an $n$ such that $G$
admits a homomorphism to $U(n,r)$.
}\end{alphlemma}

In \cite{MR837951} the local complete graphs have been generalized
as follows.
\begin{defin}{\cite{1096284}
Let $n,\ r$ and $t$ be positive integers  where $n\geq r\geq 2t$.
Set $U_t(n,r)$ to be the local Kneser graph whose vertex set
contains all ordered pairs $(A,B)$ such that $|A|=t,\ |B|=r-t,\
A, B\subseteq [n]$ and $A\cap B=\varnothing$. Also, two vertices
$(A,B)$ and $(C,D)$ of $U_t(n,r)$ are adjacent if $ A\subseteq D$
and $C\subseteq B$. }\end{defin}
{\bf Remark.} Note that $U_1(n, r) = U(n, r)$, while $U_t(r, r) =
KG(r,t)$. Hence the graph $U_t(n, r)$ provides a common
generalization of Kneser graphs and local complete graphs $U(n,
r)$ in \cite{MR837951}.

In this paper, we investigate some properties of local Kneser
graphs. In this regard, as a generalization of the Erd${\rm
\ddot{o}}$s-Ko-Rado theorem, we characterize the maximum
independent sets of local Kneser graphs. Next, we provide an
upper bound for their chromatic number.

\section{Local Kneser Graphs}\label{loc}

In this section we study some properties of the graph $U_t(n,r)$.
First, we characterize the maximum independent sets of $U_t(n,r)$.
To begin we compute the independence number of $U_t(n,r)$. First,
we introduce some notations which will be used throughout the
paper.

Assume that $\sigma$ is a permutation of $[n]$, $R\subseteq [n]$
and $|R|=r$. It should be noted that $\sigma$ provides an ordering
for $[n]$, i.e., $\sigma(1)<\sigma(2)<\cdots<\sigma(n)$. Define
$\min_\sigma R$ to be the minimum member of $R$ according to the
ordering $\sigma$, i.e., $\displaystyle{\min_\sigma R\isdef
\displaystyle{\sigma(\min\{\sigma^{-1}(r)| r \in R\})}}$.

Define
$$V_R\isdef\{(A,B)\ |\  A\cup B=R,\ |A|=t\ and\ A\cap
B=\varnothing\}$$ and  set
$$I_{\sigma,R}\isdef\left\{(A,B)\ |\ \min_\sigma R\in A,\ A\cup B=R,\ |A|=t,\
A\cap B=\varnothing\right \}.$$
Also, define
$$S_{\sigma}\isdef\bigcup_{R\subseteq [n],|R|=r}I_{\sigma,R}.$$\\

Note that the independence number of $U_t(n,r)$ has been computed
in \cite{1096284}. Here we present a proof. It is clear that the
induced subgraph of $U_t(n,r)$ obtained by the vertices in $V_R$
is isomorphic to  the Kneser graph $KG(r,t)$ and it is denoted by
$KG_R(r,t)$. That is why we call the graph $U_t(n,r)$ the local
Kneser graph. It is straightforward to check that for every
$\sigma\in S_n $, $I_{\sigma,R}$ is a maximum independent set of
$KG_R(r,t)$. Also, one can  easily see that $S_{\sigma}$ is an
independent set in $U_t(n,r)$ of order ${r-1 \choose t-1}{n
\choose r}$. Clearly, $KG(r,t)$ is a subgraph of $U_t(n,r)$.
Hence, $KG(r,t)\rightarrow U_t(n,r)$. By using Bondy and Hell
theorem \cite{MR1067243}, we have
$$\frac{{r \choose t}}{{{r-1} \choose {t-1}}}  \leq \frac{{r
\choose t}{ n\choose r}}{\alpha (U_t(n,r))}.$$ Hence, $\alpha
(U_t(n,r))\leq{r-1 \choose t-1}{n \choose r}$. Consequently,
$\alpha (U_t(n,r))={r-1 \choose t-1}{n \choose r}$ and
$S_{\sigma}$ is a maximum
independent set of $U_t(n,r)$ .\\
Now, we are ready to show that for every maximum independent set
$S$ in $U_t(n,r)$ there exists  a permutation $\sigma$ of $S_n$
such that $S=S_\sigma$.

Consider a  maximum independent set $S$ in $U_t(n,r)$. Note that
$|S|={r-1\choose t-1}{n\choose r}$. One can easily see that  for
every $R\subseteq [n]$ ($|R|=r$), $V_R\cap S$ is a maximum
independent set in $KG_R(r,t)$. By the ~Erd{\H{o}}s-Ko-Rado
theorem \cite{MR0140419}, there is an $x(S,R)\in R$ such that
$x(S,R)\in \displaystyle{\bigcap_{(A,B)\in V_R\cap S}}A$.

\begin{lem}
\label{lem} { Let $S$ be a maximum independent set in $U_t(n,r)$
where $n\geq r>2t$. Also, assume that  $R,\ R'$ are two distinct
$r$-subsets of $[n]$. If $x(S,R)=x\in R\cap R'$, then
$x(S,R')\notin R\cap R'\setminus \{x\}$.

}\end{lem}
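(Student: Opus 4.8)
I need to analyze the structure of the independent set $S$ restricted to the subsets $V_R$ and $V_{R'}$ and see how the "common element" $x(S,R)$ propagates. The idea is to suppose for contradiction that $x(S,R) = x$ and $x(S,R') = y$ where $y \in R \cap R'$ and $y \neq x$. Since $|R \cap R'| \leq r-1$ and we know $x, y$ are two distinct elements of $R \cap R'$, I want to build two vertices $(A,B) \in V_R \cap S$ and $(C,D) \in V_{R'} \cap S$ that are adjacent in $U_t(n,r)$, contradicting independence of $S$.

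**Key construction.**

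Recall adjacency: $(A,B) \sim (C,D)$ iff $A \subseteq D$ and $C \subseteq B$. I want $(A,B)\in V_R\cap S$, so $A\cup B=R$, $|A|=t$, and $x\in A$ (since $x(S,R)=x$ means $x$ lies in the $A$-part of every member of $V_R\cap S$). Similarly $(C,D)\in V_{R'}\cap S$ forces $y\in C$. Actually I must be more careful: knowing $x\in\bigcap_{(A,B)\in V_R\cap S}A$ tells me that $x$ is in the $A$-part of every such pair, but does not by itself tell me which pairs belong to $S$. So the first task is to understand $V_R\cap S$ better: since it is a \emph{maximum} independent set of $KG_R(r,t)\cong KG(r,t)$ (size $\binom{r-1}{t-1}$), and $r>2t$ guarantees (by the uniqueness part of Erd\H{o}s--Ko--Rado, using $r>2t$ strictly) that $V_R\cap S = I_{\tau,R}$ for some ordering $\tau$ with $\min_\tau R = x$. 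In particular, \emph{every} $t$-subset $A\subseteq R$ with $x\in A$ arises as the $A$-part of some pair in $V_R\cap S$, and the $B$-part is then forced to be $R\setminus A$. Symmetrically for $R'$: every $t$-subset $C\subseteq R'$ containing $y$ appears.

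**Finishing the contradiction.**

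Now I choose $A$ and $C$ to force adjacency. I want $A\subseteq D = R'\setminus C$ and $C\subseteq B = R\setminus A$. Equivalently $A\cap C=\varnothing$, $A\subseteq R'$, and $C\subseteq R$. So I need a $t$-subset $A$ of $R\cap R'$ containing $x$ but not $y$, and a $t$-subset $C$ of $R\cap R'$ containing $y$, disjoint from $A$. This requires $|R\cap R'|\geq 2t$, which is where I expect the \textbf{main obstacle}: a priori $R\cap R'$ could be small. I handle this by noting that if $|R\cap R'|<2t$ the statement needs a separate (easier) argument, or rather: when $|R\cap R'|\le r-1$ but is small, I instead use pairs where $A\subseteq R\cap R'$ only needs $x\in A$, $A\subseteq R'$ and $C\subseteq R\setminus A$ with $y\in C\subseteq R'\cap(R\setminus A)$; the constraint reduces to needing $|R\cap R'|\ge t + \max(1, \dots)$ — I will split into cases on $|R\cap R'|$ and in the small-intersection case derive the contradiction directly by picking $A = \{x\}\cup(\text{$t-1$ elements of }R\cap R'\setminus\{x,y\})$ if room permits, and otherwise pad $A$ with elements of $R\setminus R'$ — but then $A\not\subseteq R'$, so instead one uses that the pair with $A$-part chosen this way has $B$-part containing $y$ and all of $R\cap R'\setminus A$, and pairs it against a suitable $(C,D)$ from $V_{R'}\cap S$. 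The cleanest route, which I'll pursue, is: pick any $t$-set $C\subseteq R'$ with $y\in C$ and $x\notin C$ (possible since $|R'|\ge r>2t\ge t+1$), pick any $t$-set $A\subseteq R$ with $x\in A$ and $A\cap C=\varnothing$ — possible because $|R\setminus C|\ge |R| - |R\cap C|$ and $R\cap C$ has size at most... one must check $|R\setminus C|\ge t$ and $x\in R\setminus C$. Then check $A\subseteq D$: need $A\cap C=\varnothing$ (done) \emph{and} $A\subseteq R'$? No — $D=R'\setminus C$ so $A\subseteq D$ needs $A\subseteq R'$. So I genuinely need $A\subseteq R\cap R'$, forcing the case analysis on $|R\cap R'|$. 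I will therefore organize the proof as: (1) reduce $V_R\cap S$ to the canonical EKR form using $r>2t$; (2) if $|R\cap R'|\ge 2t$, directly build the adjacent pair as above; (3) if $|R\cap R'|<2t$, observe $|R\cap R'|\ge 2$ is the only case in question and derive the contradiction using the structure more delicately, exploiting that $t$-subsets of $R$ not meeting a chosen $t$-subset of $R'$ still exist by a counting bound from $r>2t$.
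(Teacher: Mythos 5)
Your step (1) (the Erd\H{o}s--Ko--Rado reduction showing that \emph{every} $t$-subset of $R$ containing $x$ occurs as the $A$-part of some member of $V_R\cap S$) and your step (2) (the direct construction of an adjacent pair when $|R\cap R'|\ge 2t$) are both sound, and step (2) is essentially the paper's base case. But your case (3) is a genuine gap, not a deferred routine verification. As you yourself observe, adjacency between $(A,B)\in V_R$ and $(C,D)\in V_{R'}$ forces $A\subseteq D\subseteq R'$ and $C\subseteq B\subseteq R$, hence $A,C\subseteq R\cap R'$ with $A\cap C=\varnothing$. Therefore when $|R\cap R'|<2t$ there is \emph{no edge at all} between $V_R$ and $V_{R'}$, and the independence of $S$ imposes no direct constraint tying $x(S,R)$ to $x(S,R')$. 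No amount of "counting $t$-subsets of $R$ not meeting a chosen $t$-subset of $R'$" can repair this, because the obstruction is $A\subseteq R'$, not disjointness. Your plan explicitly abandons its own "cleanest route" mid-paragraph and leaves the small-intersection case as an intention rather than an argument.

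The missing idea is to propagate the constraint through intermediate $r$-sets. The paper inducts on $|R\setminus R'|$: the base case $|R\setminus R'|=1$ has $|R\cap R'|=r-1\ge 2t$ (this is exactly where $r>2t$ is used), so your direct construction applies; for $|R\setminus R'|=k>1$ one picks $y\in R\setminus R'$, $y'\in R'\setminus R$, sets $R''=(R'\setminus\{y'\})\cup\{y\}$, and applies the induction hypothesis to the pairs $(R',R'')$ and $(R,R'')$ (with $|R'\setminus R''|=1$ and $|R\setminus R''|=k-1$) to pin $x(S,R'')$ into two incompatible sets, a contradiction. Without this chaining device (or some equivalent), your outline does not prove the lemma for $t\ge 2$ and $2\le |R\cap R'|<2t$.
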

\begin{proof}{
Assume that $x(S,R)=x$ and $x(S,R')=z$. We prove this lemma by
induction on $|R\setminus R'|$.\\
Let $|R\setminus R'|=1 $. Then there are $u\in R$ and  $v\in R'$
such that $R=(R'\setminus \{v\})\cup \{u\}$. If $x(S,R')=z\in
R\cap R'\setminus \{x\}$, then  there exist $(A,B)\in S$ and
$(A',B')\in S$ such that $A,B\subset R$, $x\in A$,  $u,z\in B$ and
 $A',B'\subset R'$, $z\in A'$, $x,v\in B'$,
$A'\subset B$, $A\subset B'$. Hence, $(A,B)$ and $(A',B')$ are
adjacent which is a contradiction.

Suppose that $k>1$ and the lemma holds for $|R\setminus R'|<k$.
Now, let $|R\setminus R'|=k$. On the contrary, assume that $z\in R
\cap R'$. Choose $y\in R\setminus R'$ and $y'\in R'\setminus R$
and set $R''=(R'\setminus \{y'\})\cup \{y\}$. Since $|R'\setminus
R''|=1<k$, we have $x(S,R'')\notin R'\cap R''\setminus \{z\}$;
consequently, $x(S,R'')\in\{y,z\}$. On the other hand,
$|R\setminus R''|=k-1<k$; hence, $x(S,R'')\notin R\cap
R''\setminus \{x\}$. But, $\{y,z\}\subset R\cap R''\setminus
\{x\}$ which is a contradiction.}\end{proof}

Now, we characterize the maximum independent sets of local Kneser
graphs.

\begin{thm}
Let $S$ be a maximum independent set in $U_t(n,r)$. Then there
exists a permutation  $\sigma\in S_{n}$ such that $S=S_{\sigma}$.
\end{thm}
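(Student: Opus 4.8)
The plan is to extract a single ordering from the local data $x(S,R)$, $R \in \binom{[n]}{r}$, and show it is consistent across all $r$-subsets. First I would use Lemma~\ref{lem} to build a tournament (complete directed graph) on $[n]$: for distinct $a,b\in[n]$, pick any $R\in\binom{[n]}{r}$ containing both $a$ and $b$ (possible since $n\geq r>2t\geq 2$), and orient the edge from $x(S,R)$ toward the other vertex if $x(S,R)\in\{a,b\}$; if $x(S,R)\notin\{a,b\}$ the direction is determined by looking at how $a$ and $b$ sit relative to $x(S,R)$ inside $R$ — but the cleaner route is to first show that $x(S,R)$ depends on $R$ only through a global linear order, which is what Lemma~\ref{lem} is really saying. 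Concretely, I would show: the relation $a \prec b$ defined by ``there exists $R\ni a,b$ with $x(S,R)=a$'' is well-defined (independent of the choice of $R$) and is a strict total order on $[n]$.

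The heart of the argument is well-definedness and transitivity, and Lemma~\ref{lem} is tailored for exactly this. For well-definedness: suppose $R\ni a,b$ with $x(S,R)=a$ and $R'\ni a,b$ with $x(S,R')=b$. Then $x(S,R)=a\in R\cap R'$, but $x(S,R')=b\in R\cap R'\setminus\{a\}$, contradicting Lemma~\ref{lem}. So at most one of $a,b$ can ever be ``selected'' over the other; since for any $R\ni a,b$ the element $x(S,R)$ lies in $R$, one might worry $x(S,R)$ is some third element $c$. Here I would need the auxiliary observation that $x(S,R)$ is always forced to be ``the $\prec$-minimum of $R$'': iterate Lemma~\ref{lem} — given any $a \in R\setminus\{x(S,R)\}$, find a chain of $r$-sets connecting $R$ to one containing both $x(S,R)$ and $a$ and nothing else problematic, concluding $x(S,R)\prec a$. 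This also yields totality (for any $a,b$ take $R\ni a,b$; then $x(S,R)\in\{a,b\}$ must hold, else $x(S,R)=c$ with $c\prec a$ and $c\prec b$, which is fine, but then replace $R$: swap out $c$ for a fresh element and re-apply the lemma to force the selected element into $\{a,b\}$). Transitivity follows similarly: from $a\prec b$ and $b\prec c$, pick $R\supseteq\{a,b,c\}$; $x(S,R)$ is $\prec$-minimal in $R$ hence $\prec b$, and by well-definedness it cannot be $b$ or $c$, so it is $a$ or an even smaller element — and by the ``minimum'' characterization, $x(S,R)=a$, giving $a\prec c$.

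Once $\prec$ is a strict total order on $[n]$, let $\sigma\in S_n$ be the permutation with $\sigma(1)\prec\sigma(2)\prec\cdots\prec\sigma(n)$. Then for every $R\in\binom{[n]}{r}$ we have $x(S,R)=\min_\sigma R$, so $V_R\cap S\subseteq I_{\sigma,R}$; since $V_R\cap S$ is a maximum independent set of $KG_R(r,t)$ and $|I_{\sigma,R}|=\binom{r-1}{t-1}=\alpha(KG(r,t))$, equality holds: $V_R\cap S = I_{\sigma,R}$. Taking the union over all $R$ gives $S=\bigcup_R I_{\sigma,R}=S_\sigma$. I expect the main obstacle to be the chains-of-$r$-sets bookkeeping needed to upgrade Lemma~\ref{lem} (which only compares two sets with overlapping selected element) to the global statement ``$x(S,R)=\min_\prec R$'' and to verify transitivity — making sure at each swap the hypotheses of the lemma are met and that the induction on symmetric-difference size closes. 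The case hypothesis $r>2t$ (strict) is presumably needed precisely so that Lemma~\ref{lem} is available; the degenerate case $r=2t$ would have to be handled separately or is excluded.
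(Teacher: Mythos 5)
Your endgame is fine: if you can produce a permutation $\sigma$ with $x(S,R)=\min_\sigma R$ for every $r$-set $R$, then $V_R\cap S\subseteq I_{\sigma,R}$ together with $|V_R\cap S|=\binom{r-1}{t-1}=|I_{\sigma,R}|$ forces $V_R\cap S=I_{\sigma,R}$ for all $R$, hence $S=S_\sigma$; and antisymmetry of your relation $\prec$ is a correct use of Lemma~1. But the two structural claims you lean on, totality and transitivity of $\prec$, are respectively false and unproved. Totality genuinely fails: take $S=S_\sigma$ itself and let $a,b$ be two of the last $r-1$ elements in the $\sigma$-order; every $r$-set containing $a$ or $b$ also contains a $\sigma$-smaller element, so $x(S,R)\notin\{a,b\}$ for every $R$, and $a,b$ are $\prec$-incomparable. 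Your swap trick cannot rescue this: after replacing $c=x(S,R)$ by a fresh element $d$, the hypothesis of Lemma~1 (that $x(S,R)\in R\cap R'$) no longer holds, so the lemma says nothing about $x(S,R')$, which may perfectly well be $d$ or any other element outside $\{a,b\}$. The transitivity step has the same defect: from $a\prec b\prec c$ and $R\supseteq\{a,b,c\}$, antisymmetry gives only $x(S,R)\notin\{b,c\}$; nothing forces $x(S,R)=a$. (Your ``$\prec$-minimum of $R$'' observation is trivially true by the definition of $\prec$, with $R$ itself as witness, so no chain of $r$-sets is needed — but it is also not the statement that closes the argument.)

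What your route actually requires, and never establishes, is that $\prec$ is acyclic: Lemma~1 rules out only $2$-cycles, and a linear order $\sigma$ extending $\prec$ exists precisely when there are no directed cycles at all; with such a $\sigma$ your final paragraph works verbatim, since $x(S,R)\prec y$ for each $y\in R\setminus\{x(S,R)\}$ holds by definition. Excluding cycles of length at least $3$ takes genuine work (for a $3$-cycle with witnesses $R_1,R_2,R_3$ one must bring in an auxiliary set such as $(R_1\setminus\{x\})\cup\{c\}$ and play it off against all three witnesses via Lemma~1; longer cycles need an induction), and this is exactly where the paper goes a different way: it encodes the selections as a digraph $D_S$ with out-degrees $d_i$, uses Lemma~1 only to guarantee at most one arc per pair (so $\sum_i d_i\le\binom{n}{2}$), and then the counting bound $\binom{n}{r}\le\sum_i\binom{d_i}{r-1}$ forces the out-degree sequence $n-1,n-2,\ldots,r-1$, from which the ordering falls out with no discussion of cycles. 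So your plan is salvageable, but as written it has a genuine gap at its central step. (Your closing remark is right that $r>2t$ is needed: for $r=2t$ the theorem itself fails, e.g.\ $U_1(n,2)$ is a perfect matching whose maximum independent sets correspond to arbitrary tournaments, only the transitive ones being of the form $S_\sigma$.)
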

\begin{proof}{
Suppose $S$ is a maximum independent set in $U_t(n,r)$. We define
a directed graph $D_{S}$ whose vertex set and edge set are
$$V(D_{S})=\{1,2,\ldots ,n\}$$ and \\
\begin{centerline}{$E(D_{S})\isdef\{(i,j)\mid\, \exists\,R\subseteq [n],
|R|=r,\,\,i\neq j,\, \{i,j\}\subseteq R,\, i=x(S,R) \},$
respectively.}
\end{centerline}

Assume that ${d}_1\geq {d}_2\geq\cdots \geq {d}_n$ is the out
degree sequence of $D_{S}$ where ${d}_{i}$ is the out degree of
$v_{i}$ for $i=1,2,\ldots,n$ . In view of Lemma \ref{lem}, one
can see that $D_{S}$ is a directed graph with no multiplicity.
Consequently, $|S|= {d_{1}\choose r-1}{r-1 \choose
t-1}+{d_{2}\choose r-1}{r-1 \choose t-1}+\cdots+{d_{n}\choose
r-1}{r-1 \choose t-1}$. However, ${d_{1}\choose
r-1}+{d_{2}\choose r-1}+\cdots+{d_{n}\choose r-1}$ is maximized
when $d_{1}=n-1,\,d_{2}=n-2,\,\ldots,d_{n-r+1}=r-1$. Choose a
permutation  $\sigma\in S_{n}$ such that $\sigma(i)=v_i $ for
$i=1,2,\ldots,n-r+1$. Obviously, $S=S_\sigma$. }\end{proof}

From the above discussion, directed graph $D_S$ is related to the
independent set $S$ of $U_t(n,r)$. Conversely, suppose that $D$
is a directed graph on $[n]$ with no multiplicity. Now, we want
to construct an independent set $I_D$ which is related to $D$. Set
$$I_{D}=\{(A,B)\,|\,\exists\ i\in [n];  \, A, B\subseteq N^+(i)\cup \{i\},\ i\in
A,\ A\cap B=\varnothing,\ |A|=t \, \ |B|=r-t \},$$ where
$N^+(i)=\{j\ |\ (i,j)\in E(D)\}$. Clearly, $I_{D}$ is an
independent set in $U_t(n,r)$. It is easy to see that for any
maximum independent set $S$ in $U_t(n,r)$ we have $I_{D_S}=S$.

Clearly, $\sigma:U_t(n,r)\longrightarrow KG(n,t)$ is a
homomorphism where $\sigma((A,B))\isdef A$. Therefore,
$\chi(U_t(n,r))\leq n-2t+2$. The chromatic number of local
complete graphs has been investigated in \cite{MR837951}.

\begin{alphthm}{\rm \cite{MR837951}}\label{bb}
Let $n$ and $r$ be positive integers  where $n\geq r$. We have
$\chi(U(n, r)) \leq r 2^r \log_2\log_2 n$.
\end{alphthm}

Here we introduce an upper bound for the chromatic number of local
Kneser graphs.

\begin{thm}\label{lov}
If $n,\ r$ and $t$ are positive integers  where $n\geq r\geq 2t$,
then $\chi(U_t(n,r))\leq\lceil\frac{r^2}{t}(\ln n +1)\rceil$
\end{thm}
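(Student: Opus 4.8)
The plan is to use a random coloring argument of the following shape. We want a proper coloring of $U_t(n,r)$ with $N \isdef \lceil \frac{r^2}{t}(\ln n + 1)\rceil$ colors. The natural object to randomize is a partition (or covering) of the ground set $[n]$: if we assign to each element $i\in[n]$ a color class, a vertex $(A,B)$ of $U_t(n,r)$ naturally inherits a color from its $t$-set $A$. The issue is that $A$ has $t$ elements, so to get a well-defined color we should instead think of choosing, for each color $c\in[N]$, a random subset $S_c\subseteq[n]$, and trying to color $(A,B)$ by some $c$ for which $A\subseteq S_c$. For this to be a proper coloring we need: (i) every possible $t$-set $A$ that occurs as the first coordinate of a vertex is contained in at least one $S_c$ (so the coloring is total), and (ii) if $(A,B)$ and $(C,D)$ are adjacent — meaning $A\subseteq D$ and $C\subseteq B$, so in particular $A,C\subseteq A\cup B\cup C\cup D$ with $A\cap C=\varnothing$ after intersecting appropriately — then they do not both have to take the same color class, i.e. no single $S_c$ contains both $A$ and $C$. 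Since for adjacent vertices $A$ and $C$ are disjoint $t$-sets sitting inside a common $r$-set, condition (ii) amounts to: no $S_c$ contains an $r$-subset of $[n]$... which is false. So the correct formulation must be more careful.

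The right approach, I believe, is this: choose $N$ random subsets $S_1,\dots,S_N$ of $[n]$, each $S_c$ obtained by including each $i\in[n]$ independently with probability $p \isdef t/r$. Color a vertex $(A,B)$ with the smallest index $c$ such that $A\subseteq S_c$ and $B\cap S_c=\varnothing$ (equivalently $A\cup B$ meets $S_c$ in exactly $A$). If $(A,B)$ and $(C,D)$ are adjacent then $A\subseteq D$ and $C\subseteq B$; if they received the same color $c$ then $A\subseteq S_c$, $B\cap S_c=\varnothing$, $C\subseteq S_c$, $D\cap S_c=\varnothing$. But $A\subseteq D$ forces $A\subseteq S_c$ and $A\cap S_c=\varnothing$, hence $A=\varnothing$, contradicting $|A|=t\geq 1$. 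So \emph{any} such coloring is automatically proper; the only thing to verify is that with positive probability every vertex gets \emph{some} color, i.e. for every vertex $(A,B)$ there is at least one $c$ with $A\subseteq S_c$ and $B\cap S_c=\varnothing$.

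Now I would do the probability estimate. For a fixed vertex $(A,B)$, with $|A|=t$ and $|B|=r-t$, and a fixed $c$, the probability that $A\subseteq S_c$ and $B\cap S_c=\varnothing$ equals $p^t(1-p)^{r-t} = (t/r)^t(1-t/r)^{r-t}$. The probability that \emph{no} $c$ works for $(A,B)$ is $\bigl(1 - p^t(1-p)^{r-t}\bigr)^N \leq \exp\bigl(-N\, p^t(1-p)^{r-t}\bigr)$. By a union bound over all at most $n^r$ vertices (crudely $\binom{n}{t}\binom{n-t}{r-t}\leq n^r$), it suffices to have $N\, p^t(1-p)^{r-t} > r\ln n$. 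The main obstacle is a clean lower bound on $p^t(1-p)^{r-t}$ with $p=t/r$: one has $(1-t/r)^{r-t}\geq e^{-t}$ does not hold in general, so one instead uses $(1-t/r)^{r-t} = \bigl((1-t/r)^{r/t - 1}\bigr)^{t}$ and the elementary inequality $(1-x)^{1/x - 1}\geq e^{-1}$ for $0<x\leq 1$ (or, being slightly lossy, bounds everything by $e^{-t}$ up to the factor we can afford), while $(t/r)^t$ contributes the $(t/r)^t$ term. Combining, $p^t(1-p)^{r-t} \geq (t/r)^t e^{-t} = (t/(er))^t$... which is exponentially small in $t$ and gives an $N$ far larger than claimed — so this naive choice of $p$ is wrong too, and the exponent $r^2/t$ in the statement tells me the intended $p$ must be much larger, roughly $p$ close to $1$ with $1-p\approx t/r^2$ or similar, so that $p^t(1-p)^{r-t}\approx e^{-(r-t)t/r^2}\cdot 1$, giving $N\approx \frac{r^2}{t}\cdot\frac{r\ln n}{\,\cdot\,}$; pinning down this constant and the correct $p$ so that the final bound comes out to exactly $\lceil \frac{r^2}{t}(\ln n+1)\rceil$ is the key calculation, and it is the step I expect to require the most care. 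Once $p$ is chosen so that $p^t(1-p)^{r-t}\geq \frac{r\ln n + r}{N}$ with $N$ as in the statement, the union bound gives expected number of uncovered vertices $< 1$, so some outcome covers all vertices, yielding the desired proper coloring and completing the proof.
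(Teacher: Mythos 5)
Your overall framework (randomly generate color classes that are automatically independent sets, then union-bound over vertices) is the same as the paper's, and your properness check is correct. The genuine gap is exactly the step you defer as ``the key calculation'': it cannot be made to work with your coverage event. For \emph{every} choice of $p$ one has $p^t(1-p)^{r-t}\leq (t/r)^t(1-t/r)^{r-t}\leq \binom{r}{t}^{-1}$ (the last inequality because $\binom{r}{t}(t/r)^t(1-t/r)^{r-t}$ is one term of a binomial expansion summing to $1$), so demanding $A\subseteq S_c$ and $B\cap S_c=\varnothing$ makes the per-color success probability exponentially small (e.g.\ for $r=2t$), and the union bound then forces $N=\Omega\bigl(\binom{r}{t}\,r\ln n\bigr)$, far above $\lceil \frac{r^2}{t}(\ln n+1)\rceil$. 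Your proposed rescue, $1-p\approx t/r^2$, rests on a miscalculation: with that choice $(1-p)^{r-t}=(t/r^2)^{r-t}$, not $e^{-(r-t)t/r^2}$, so no choice of $p$ repairs the scheme as stated.

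The missing idea is to weaken the coverage event so that its probability is polynomially large. One option inside your setup: color $(A,B)$ by the least $c$ with $A\cap S_c\neq\varnothing$ and $B\cap S_c=\varnothing$; the same one-line properness argument goes through, and optimizing $p\approx 1/r$ gives success probability about $t/(er)$ per color, hence a bound of roughly $\frac{er^2}{t}\ln n$ --- close, but weaker than claimed by a factor $e$. The paper instead takes $l$ uniform independent random permutations $\sigma_1,\ldots,\sigma_l$ of $[n]$ and uses the maximum independent sets $S_{\sigma}$ constructed in Section 2: $(A,B)\in S_\sigma$ exactly when the $\sigma$-minimum of the $r$-set $A\cup B$ lies in $A$, an event of probability exactly $t/r$. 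Then the expected number of vertices missed by all classes is $\binom{r}{t}\binom{n}{r}(1-t/r)^l<1$ for $l=\lceil\frac{r^2}{t}(\ln n+1)\rceil$, so some outcome covers $V(U_t(n,r))$ by $l$ independent sets, which is the stated bound; the permutation (random ordering) trick, giving probability exactly $t/r$ rather than something of order $\binom{r}{t}^{-1}$ or $t/(er)$, is precisely what your proposal is missing.
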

\begin{proof}{
Assume that  $\sigma_{1},\sigma_{2},\ldots,\sigma_{l}$ are $l$
random permutations of $S_n$ such  that they have been chosen
independently and uniformly. For each vertex $(A,B)\in
V(U_t(n,r))$, define ${\cal E }_{(A,B)}$ to be the event that
$(A,B)\notin \bigcup S_{\sigma_i}$. Obviously, $(A,B)\in S_\sigma$
if and only if  there exists $a\in A$ such that $a$  precedes all
 elements of $A\cup B\setminus\{a\}$ in $\sigma$. Clearly,
$Pr({\cal E }_{(A,B)})=(1-\frac{t}{r})^l$. Consider a random
variable $X$ where $X(\sigma_1,\ldots ,\sigma_l)\isdef
|V(U_t(n,r))\setminus \bigcup S_{\sigma_i}|$. Clearly,
$E(X)={r\choose t }{n\choose r}(1-\frac{t}{r})^l$. If
$l=\lceil\frac{r^2}{t}(\ln(en)\rceil$, then $E(X)<1$. Hence,
$\chi(U_t(n,r))\leq\lceil\frac{r^2}{t}(\ln(en)\rceil$.}
\end{proof}
Theorem \ref{lov} immediately yields the following corollary.
\begin{cor}\label{lov1}
Let $n$ and $r$ be positive integers  where $n\geq r$. We have
$\chi(U(n, r)) \leq \lceil r^2(\ln n +1)\rceil$.
\end{cor}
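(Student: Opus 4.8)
The plan is to derive Corollary~\ref{lov1} as the special case $t=1$ of Theorem~\ref{lov}. Recall from the remark following the definition of $U_t(n,r)$ that $U_1(n,r)=U(n,r)$, so substituting $t=1$ into the bound of Theorem~\ref{lov} should immediately give the claim. First I would set $t=1$ in the expression $\lceil\frac{r^2}{t}(\ln n+1)\rceil$, which collapses to $\lceil r^2(\ln n+1)\rceil$; the hypothesis $n\geq r\geq 2t$ of Theorem~\ref{lov} becomes $n\geq r\geq 2$, so I would only need to additionally handle the degenerate case $r=1$ separately.

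For the case $r=1$: the graph $U(n,1)$ has vertex set $\{(a,\varnothing)\mid a\in[n]\}$ and empty edge set (since adjacency requires $a\in B$ with $|B|=r-1=0$), so $\chi(U(n,1))=1$, which is trivially at most $\lceil 1\cdot(\ln n+1)\rceil$. Thus the corollary holds in all cases $n\geq r$. Alternatively, one can simply note that for $r\geq 2$ the statement is a verbatim specialization of Theorem~\ref{lov}, and the $r=1$ case is vacuous, so no new argument is required.

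The only genuine subtlety — and the step I expect to be the ``main obstacle,'' though it is minor — is checking that the monotonicity in the proof of Theorem~\ref{lov} goes through at $t=1$: one needs $(1-\tfrac{1}{r})^l<1$ together with $\binom{n}{r}\binom{r}{1}=n\cdot\binom{n}{r}$ and $l=\lceil r^2(\ln(en))\rceil$ to force $E(X)<1$. Since the inequality $(1-\tfrac1r)^{r^2}\le e^{-r}\le \tfrac{1}{en}$ when $r^2\ge \ln(en)$, and for small $r$ one checks directly that $l$ suffices, the probabilistic estimate from Theorem~\ref{lov} applies unchanged. Hence the corollary follows, and the proof is essentially a one-line substitution: \emph{apply Theorem~\ref{lov} with $t=1$, noting $U(n,r)=U_1(n,r)$ and treating $r=1$ trivially.}
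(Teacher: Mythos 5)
Your proposal is correct and matches the paper exactly: the paper states that Theorem~\ref{lov} "immediately yields" the corollary, i.e., it is the specialization $t=1$ using $U_1(n,r)=U(n,r)$. Your extra care with the degenerate case $r=1$ (where $U(n,1)$ is edgeless) is a small bonus the paper glosses over, but the route is the same.
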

In other word, previous corollary says; if we have a proper
coloring for graph $G$ with $n$ colors which assigns at most $r$
colors in the closed neighborhood of every vertex, then
$\chi(G)\leq \lceil r^2(\ln n +1)\rceil$. Two upper bounds in
Theorem \ref{bb} and Corollary \ref{lov1} are complementary.

Note that $KG(r,t)$ is a subgraph of $U_t(m,r)$; consequently,
$r-2t+2$ is a lower bound for the chromatic number of $U_t(m,r)$
while here we show that $r-2t+2$ is an upper bound for the local
chromatic number of $U_t(m,r)$.

\begin{lem}
Assume that $n,\ r$ and $t$ are positive integers  where $n\geq
r\geq 2t$. Then $\psi(U_t(m,r))\leq r-2t+2$.
\end{lem}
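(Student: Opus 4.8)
The plan is to produce a single proper colouring of $U_t(m,r)$ in which every closed neighbourhood contains at most $r-2t+2$ colours; it will be obtained by pulling back, along the homomorphism $\sigma\colon U_t(m,r)\longrightarrow KG(m,t)$, $\sigma((A,B))=A$ already recorded above, a suitable proper colouring of the Kneser graph $KG(m,t)$.

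The first step is to describe the $\sigma$-image of a closed neighbourhood. If $(C,D)$ is adjacent to $(A,B)$ then $C\subseteq B$ by definition of $U_t(m,r)$, so $\sigma((C,D))=C\in\binom{B}{t}$. Conversely, given any $C\in\binom{B}{t}$, the sets $A$ and $C$ are disjoint (because $A\cap B=\varnothing$), and since $m\ge r$ we may enlarge $A$ to an $(r-t)$-subset $D$ of $[m]$ that is disjoint from $C$; then $(C,D)$ is a vertex of $U_t(m,r)$ adjacent to $(A,B)$. Hence $\sigma(N[(A,B)])=\{A\}\cup\binom{B}{t}$: the homomorphism $\sigma$ collapses every closed neighbourhood of $U_t(m,r)$ onto a single copy of $KG(r-t,t)$ together with one extra vertex. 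This observation is the only point of real substance in the argument.

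The second step is the choice of colouring. On $KG(m,t)$ take $\phi$ given by $\phi(A)=\min A$ if $\min A\le m-2t+1$, and $\phi(A)=m-2t+2$ otherwise. A one-line verification shows $\phi$ is proper: two disjoint $t$-sets receiving the same colour would either share their common smallest element, or both be contained in $\{m-2t+2,\dots,m\}$, which has only $2t-1$ elements --- both impossible. Consequently $c\isdef\phi\circ\sigma$ is a proper colouring of $U_t(m,r)$, since adjacent vertices have disjoint first coordinates and hence distinct $\phi$-values.

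It remains to count colours in a closed neighbourhood. Write $B=\{b_1<b_2<\cdots<b_{r-t}\}$. The minimum of any $t$-subset of $B$ lies among $b_1,\dots,b_{r-2t+1}$, so $\phi$ takes at most $r-2t+1$ values on $\binom{B}{t}$; adding the colour $\phi(A)$ gives $|\{c(u)\mid u\in N[(A,B)]\}|=|\phi(\{A\}\cup\binom{B}{t})|\le r-2t+2$. As this bound holds for every vertex, $\psi(U_t(m,r))\le r-2t+2$. Everything after the first step is pure bookkeeping, so I do not anticipate a genuine obstacle; the work lies in recognising that $\sigma$ flattens each closed neighbourhood into essentially one small Kneser subgraph, on which the ``smallest-element'' colouring is automatically economical.
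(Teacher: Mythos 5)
Your proposal is correct and is essentially the paper's argument in different packaging: both rest on the single substantive observation that the first coordinate of any neighbour of $(A,B)$ is a $t$-subset of $B$, whose minimum therefore lies among the $r-2t+1$ smallest elements of $B$. The paper encodes this as the homomorphism $(A,B)\mapsto(\min A,\,B^*)$ into the local complete graph $U(m-t+1,\,r-2t+2)$ and appeals to the lemma of Erd\H{o}s et al.\ characterizing $\psi$ via such homomorphisms, whereas you exhibit the corresponding colouring $(A,B)\mapsto\phi(A)$ explicitly and count the colours in each closed neighbourhood directly; the two formulations are interchangeable by that same lemma.
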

\begin{proof}{
Let $(A,B)\in V(U_t(m,r))$ , $A=\{a_1,a_2,\ldots,a_t\}$ and
$B=\{b_1,b_2,\ldots,b_{r-t}\}$ such that $a_1<a_2<\cdots<a_t$ and
$b_1<b_2<\cdots<b_{r-t}$. Now, we show that there exists a graph
homomorphism from $U_t(m,r)$ to $U(m-t+1,r-2t+2)$. To see this,
define $f((A,B))\isdef (\min A,B^*)$ where $\min A=a_1$ and
$B^*\isdef \{b_1,b_2,\ldots,b_{r-2t+1}\}$. If $(A,B)$ and $ (C,D)$
are adjacent in $U_t(m,r)$, then obviously $\min A\in D^*$, $\min
C \in B^*$ and $\min A\neq \min C$. Therefore, $f$ is a graph
homomorphism, as desired.}
\end{proof}
The aforementioned lemma motivates us to propose the following
question.

\begin{qu}
Assume that $n,\ r$ and $t$ are positive integers  where $n\geq
r\geq 2t$. Is it true that $\psi(U_t(m,r))=r-2t+2$?
\end{qu}

\end{document}